\documentclass{dcds-b}
\usepackage{amsmath,amssymb,amsfonts}
\usepackage{paralist}
\usepackage{mathrsfs}
\usepackage{breakurl}

\usepackage[colorlinks=true]{hyperref}
\hypersetup{urlcolor=blue,citecolor=red}

  \textheight=8.2 true in
   \textwidth=5.0 true in
    \topmargin 30pt
     \setcounter{page}{1}

% The next 5 line will be entered by an editorial staff.

 %% Please minimize the usage of "newtheorem", "newcommand", and use
 %% equation numbers only situation when they provide essential convenience
 %% Try to avoid defining your own macros

\newtheorem{theorem}{Theorem}[section]
\newtheorem{corollary}{Corollary}

\newtheorem{lemma}[theorem]{Lemma}

\newtheorem{assumption}{Assumption}

\theoremstyle{definition}
\newtheorem{definition}[theorem]{Definition}
\newtheorem{remark}{Remark}

\newcommand{\bp}{\boldsymbol{p}}
\newcommand{\bx}{\boldsymbol{x}}

\newcommand{\dist}{\mathop{\mathrm{dist}}}
\newcommand{\diam}{\mathop{\mathrm{diam}}}
\newcommand{\coneN}{\mathbb{K}^{N}_{+}}
\newcommand{\intconeN}{\overset{\circ}{\mathbb{K}}{\vphantom{\mathbb{K}}}^{N}_{+}}
\newcommand{\simpN}{\Sigma_{N}}
\newcommand{\intsimpN}{\overset{\circ}{\Sigma}_{N}}

\title[Stability of random Markov chains]
{Asymptotic behaviour of  random tridiagonal Markov chains in biological applications}%
\author[Peter E. Kloeden and Victor S. Kozyakin]{}

\keywords{Random Markov chain, positive cones,  Hilbert metric, uniformly contracting cocycles, linear cocycles, random attractors}

\subjclass{Primary: 15B48, 15B52, 37H10; Secondary: 15B51, 60J10, 92C99}
\email{kloeden@math.uni-frankfurt.de}
\email{kozyakin@iitp.ru}

\thanks{The first author was DFG grants KL~1203/7-1, the   Spanish
Ministerio de Ciencia e Innovaci\'{o}n project
MTM2011-22411, the Consejer\'{\i}a de Innovaci\'{o}n, Ciencia y Empresa
(Junta de Andaluc\'{\i}a) under the Ayuda 2009/FQM314 and
the Proyecto de Excelencia P07-FQM-02468.  The second author was
partially supported by the
Russian Foundation for Basic Research, project no.
10-01-93112.}

\begin{document}

\maketitle \centerline{\scshape Peter E. Kloeden}
\medskip
{\footnotesize
 %% please put the address of the first author
 \centerline{Institut f\"ur  Mathematik, Goethe Universit\"at}
  \centerline{D-60054 Frankfurt am Main, Germany}
} %% Do not forget to end the {\footnotesize by the sign }
\medskip

\centerline{\scshape Victor S. Kozyakin}
\medskip
{\footnotesize
 %% please put the address of the second author
 \centerline{Institute for Information Transmission Problems}
 \centerline{Russian Academy of Sciences}
  \centerline{Bolshoj Karetny lane 19, Moscow 127994 GSP-4, Russia}
} %% Do not forget to end the {\footnotesize by the sign }

\bigskip

% The name of the associate editor will be entered by an editorial staff
\centerline{Dedicated to the memory of Alexei Pokrovsky}

\begin{abstract}
Discrete-time discrete-state random Markov chains with a tridiagonal
generator are shown to have a random attractor consisting of singleton
subsets, essentially a random path, in the simplex of probability vectors.
The proof uses the Hilbert projection metric and the fact that the linear
cocycle generated by the Markov chain is a uniformly contractive mapping
of the positive cone into itself. The proof does not involve probabilistic
properties of the sample path $\omega$ and is thus equally valid in the
nonautonomous deterministic context of Markov chains with, say,
periodically varying transitions probabilities, in which case the
attractor is a periodic path.
\end{abstract}

\section{Introduction}\label{intro}

Markov chains with a tridiagonal generator are common in biological
applications, see, e.g., \cite{AllenL:10,AEH:09,WodKom:05}. Their asymptotic
behaviour is well understood when the transitions probabilities are
constant, i.e, the Markov chain is homogeneous or, equivalently, autonomous
in the language of dynamical systems. In this paper we consider the case
where the transitions probabilities can vary in time, e.g., periodically, or
randomly due to a periodically or randomly changing environment. The Markov
chains are then nonautonomous or random dynamical systems
\cite{ArnoldL:98,Chueshov:02,PKMR} and the concepts of autonomous dynamical
systems such as equilibria are inadequate. Thus a new concept of
nonautonomous or random attractors is needed.

The results in this paper are presented in the context of random Markov
chains and random dynamical systems, although the proofs are do not depend
at all on probabilistic properties of the sample path parameter $\omega$ and
are thus equally valid in the nonautonomous deterministic context of Markov
chains with, say, periodically varying transitions probabilities.

Tridiagonal Markov chains, both deterministic and random, are presented in
Section~\ref{S-model}. The long term dynamical behaviour of the autonomous
deterministic case is then given for completeness in
Section~\ref{D-dynamics}, although it follows as a special case, because it
provides useful background information for the random case. The proof uses
the fact that random Markov chains generate contractive linear cocycles
which map a positive cone into itself. First, in Section~\ref{S-CoMetric}, a
general theorem on existence of a random attractor with singleton subsets in
a metric space is formulated and proved. The assumptions seem rather
restrictive at the first sight, but are just what is needed later. Then, in
Section~\ref{S-HilbBirk} some preliminaries from the theory of positive
linear operators are recalled, the essence of which is that in a quite a
general situation, for linear maps positive with respect to the same
invariant cone, there exists a \emph{common} metric, the Hilbert projective
metric, in which all these linear mappings are \emph{uniformly contractive}.
Finally, in Section~\ref{S-LinCocycle}, it is shown that the linear cocycles
generated by the random Markov chains in Section~\ref{S-model} satisfy the
conditions of the abstract theorem from Section~\ref{S-CoMetric} under
uniform upper and lower positivity bounds on the tridiagonal transitions
probabilities and thus have a random attractor consisting of singleton
subsets. The random attractor is essentially a randomly varying path in the
simplex of probability vectors which pathwise attracts all other iterates of
the Markov chain. In the nonautonomous deterministic setting with periodical
transitions probabilities it is a periodic path.

There is an extensive literature on products of random Markov chains, see
e.g.,
\cite{Cohn:IJMMS,Hartfiel:98,Hartfiel:02,Leiz:LAA92,NSch:LAA99,Wolf:PAMS63}.
Although the problem investigated here does not seem to have been addressed
directly as such yet in the literature, the results could probably be
obtained by extending the proofs in \cite{Hartfiel:98,Wolf:PAMS63} after
similar computations to those that are needed below. The proof given here is
preferable since it is direct and is written in the language of random
dynamical systems. In particular, the paper demonstrates that the effect of
a random attractor to be singleton is valid not only for some classes of
monotone random systems, as it is reported earlier in \cite{ChuSche:04}, but
also for the class of random Markov chains studied below.

\section{Tridiagonal Markov chains in biological models}\label{S-model}

Markov chains with tridiagonal transition matrices are common in biological
models, for example, birth-and-death processes \cite{AllenL:10}, cell-cell
communication \cite{AEH:09} and cancer dynamics \cite{WodKom:05}, to name
just a few.

To fix ideas, consider the distance $d(t_n)$ between two cells at time $t_n
= n \Delta$, which is supposed to take discrete values in $\{1,\ldots, N\}$,
essentially the distance that they can move in one unit of time, where
$d(t_n)$ can stay unchanged or change to $d(t_n)\pm 1$ with certain
probabilities. This can be formulated as an $N$ state discrete-time Markov
chain with states $\{1,\ldots, N\}$ corresponding to the value of $d(t_n)$.

Let $\bp(t_n) = (p_1(t_n),\ldots,p_N(t_n))^T$ be the probability vector for
the state of the system at time $t_n$. The dynamics are described by the
system of difference equations
\begin{eqnarray*}
p_1(t_{n+1}) & = & \left[1- q_1\Delta\right] p_1(t_n) + q_2 \, p_2(t_n)\Delta
\\
p_j(t_{n+1}) & = & q_{2j-3}\, p_{j-1}(t_n)\Delta + \left[1- \left(q_{2j-2} +q_{2j-1}\right)\Delta \right] \, p_j(t_n) + q_{2j} \, p_{j+1}(t_n)\Delta,
\\
& & \hspace*{6.5cm} \quad j=2, \ldots, N-1,
 \\
p_N(t_{n+1}) & = & q_{2N-3} p_{N-1}(t_n)\Delta + \left[ 1- q_{2N-2}\Delta \right] p_N(t_n),
\end{eqnarray*}
where
\[
q_{j}>0,\quad j=1,\ldots,2N-2,
\]
and the $p_j$ satisfy the probability constraints
\[
\sum_{j=1}^N p_j = 1, \quad p_j \geq 0,\qquad j=1, \ldots, N
\]

This is a vector-valued difference equation
\[
\bp(t_{n+1}) = \left[I_N+ \Delta Q \right] \bp(t_{n})
\]
on the simplex $\Sigma_N$ in $\mathbb{R}^N$ defined by
\[
\Sigma_N = \bigg\{ \bp = (p_1,\cdots,p_N)^T :~ \sum_{j=1}^ N p_j = 1, ~ p_1,\ldots,p_N \in[0,1] \bigg\},
\]
where $I_N$ is the $N\times N$ identity matrix and $Q$ is the
 tridiagonal $N\times N$-matrix
\begin{equation}\label{E-defQ}
Q = \left[\begin{array}{ccccccc}
-q_1 & q_2 & & & & & \bigcirc\\
q_1 & -(q_2+q_3) & q_4 & & & & \\
 & \ddots & \ddots & \ddots & \ddots & \ddots & \\
% & & q_{2j-3} & -(q_{2j-2}+q_{2j-1}) & q_{2j} & & \\
% & & & \ddots & \ddots & \ddots & \\
 & & & & q_{2N-5} & -(q_{2N-4}+q_{2N-3}) & q_{2N-2} \\
\bigcirc & & & & & q_{2N-3} & - q_{2N-2}
\end{array}\right]
\end{equation}

This is a discrete-time finite-state Markov chain
\begin{equation}\label{DDE}
\bp^{(n+1)} = \left[I_N+ \Delta Q \right] \bp^{(n)}
\end{equation}
with the transition matrix $\left[I_N+ \Delta Q \right]$. It is a first
order linear difference equation on $\Sigma_N$ corresponding to the Euler
numerical scheme for the ordinary algebraic-differential equation
\[
\frac{d\bp}{dt} = Q \bp, \quad \bp \in
\Sigma_N,
\]
with the constant time step $\Delta > 0$.

\subsection{Random Markov chains}\label{S-random}

Let $(\Omega, \mathcal{F},\mathbb{P})$ be a probability space and suppose
now that the coefficients in the $Q$ matrix are random, i.e., the $q_{j} :
\Omega \to \mathbb{R}$ are $\mathcal{F}$-measurable mappings or,
equivalently, $Q : \Omega\to \mathbb{R}^{N\times N}$ is an
$\mathcal{F}$-measurable $N\times N$-matrix valued mapping.

This corresponds to a random environment, which is supposed to vary or be
driven by a stochastic process modelled by a metrical (i.e., measurable)
dynamical system $\Theta = \{\theta_n, n \in \mathbb{Z}\}$ on $\Omega$
generated by a bi-measurable invertible mapping $\theta : \Omega \to
\Omega$. In particular, $\Theta$ satisfies $\theta_{0}\omega = \omega$ and
\[
\theta_{m+n}\omega\equiv \theta_{m}(\theta_{n}\omega),\quad\forall~m,n\in\mathbb{Z},~\omega\in\Omega.
\]
See Arnold \cite{ArnoldL:98} for more information.

Define $L_{\omega} := I_N+ \Delta Q(\omega)$ and assume that $\Delta > 0$ is
sufficiently small, so that the eigenvalues of each matrix for given
$\omega$ lie in the unit disc of the complex plane (see next section).

This gives the \emph{random Markov chain}
\begin{equation} \label{RDE}
\bp^{(n+1)} = L_{\theta^n \omega} \bp^{(n)},
\end{equation}
which is a random linear difference equation on $\Sigma_N$, see
\cite{ArnoldL:98,Chueshov:02,IK:StochDyn03} for random difference equations.
The iterates of \eqref{RDE} are random probability vectors in $\Sigma_N$,
i.e., $\mathcal{F}$-measurable mappings $\bp : \Omega \to \Sigma_N$.

\section{Dynamical behaviour: deterministic case}\label{D-dynamics}

It is well-known that, under certain nondegeneracy conditions, the
``deterministic" Markov chain \eqref{DDE} has a unique equilibrium state
which is globally asymptotically stable in $\Sigma_N$. This result follows
as a special case of the main result of this paper below. A direct proof
using elementary methods will now be given, since it also provides useful
background information for the general ``random" case.

Let $\boldsymbol{1}_N$ be the column vector in $\mathbb{R}^N$ with all
components equal to $1$. Then
\begin{equation}\label{E-oneQ2zero}
\boldsymbol{1}_N^T Q = \boldsymbol{0},
\end{equation}
i.e. each column of $Q$ adds to zero. Moreover, $\boldsymbol{1}_N^T I_N  =
\boldsymbol{1}_N$, so $\boldsymbol{1}_N$ is a left eigenvector corresponding
to eigenvalue $\lambda = 1$ of the matrix $I_N+ \Delta Q $. Note that the
matrix $I_N+ \Delta Q$ is a stochastic matrix.

The Perron-Frobenius theorem applies to the matrix $L_{\Delta}:= I_N+ \Delta
Q$ when $\Delta > 0$ is chosen sufficiently small. In particular, it has
eigenvalue $\lambda = 1$ and there is a positive eigenvector $\bar{\bx}$
which can be normalized (in the $\|\cdot\|_1$ norm) to give a probability
vector $\bar{\bp}$, i.e. $[I_N+ \Delta Q]\bar{\bp} = \bar{\bp}$, so
$Q\bar{\bp} = 0$. In fact, we can show these properties directly for the
given matrix.

One can solve $Q\bar{\bx} = 0$ uniquely in $\mathbb{R}^N_{+}$ (up to a
scalar multiplier) since by assumption the $q_{j}>0$. Specifically,
\[
\bar{\bx}_{j+1} = \frac{q_{2j-1}}{q_{2j}}
\bar{\bx}_{j}, \quad j = 1, \ldots, N-1.
\]
Taking $\bar{\bx}_{1} = 1$ yields
\[
\bar{\bx}_{j+1} = \prod_{i=1}^j \frac{q_{2i-1}}{q_{2i}},
\quad j = 1, \ldots, N-1,
\]
and, hence, the probability vector
\[
\bar{\bp}_{1} = \frac{1}{\|\bar{\bx}\|_1}, \quad
\bar{\bp}_{j+1} = \frac{1}{\|\bar{\bx}\|_1}
\prod_{i=1}^j \frac{q_{2i-1}}{q_{2i}}, \qquad j = 1, \ldots, N-1,
\]
where
\[
\|\bar{\bx}\|_1 = \sum_{j=1}^{N}\bar{\bx}_{j} = 1+
\sum_{j=1}^{N-1}\prod_{i=1}^j \frac{q_{2i-1}}{q_{2i}}
\]
The corresponding Markov chain is, in fact, ergodic since by assumption all
the $q_j$ are positive. In particular, when $q_{2j-1} = q_{2j}$ for each
$j$, then $\bar{\bp}$ is the uniformly distributed probability vector with
identical components $\bar{\bp}_i = \frac{1}{N}$, $i=1,\ldots,N$.

\begin{theorem}
Let $q_{i}>0$ for $j=1,2,\ldots,2N-2$. Then the probability eigenvector
$\bar{\bp}$ is an asymptotically stable steady state of the difference
equation \eqref{DDE} on the simplex $\Sigma_N$.
\end{theorem}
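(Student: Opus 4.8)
The plan is to exploit the Perron--Frobenius structure already announced just before the statement. First I would check that for $\Delta>0$ small enough the matrix $L_\Delta := I_N+\Delta Q$ is a genuine column-stochastic nonnegative matrix: every column sums to $1$ by \eqref{E-oneQ2zero}, the sub- and super-diagonal entries $\Delta q_j$ are strictly positive since the $q_j>0$, and the diagonal entries $1-\Delta|Q_{ii}|$ are nonnegative once $\Delta\le 1/\max_i|Q_{ii}|$. Consequently $\Sigma_N$ is invariant under \eqref{DDE}, and $\bar{\bp}$ is a fixed point because $Q\bar{\bp}=0$.

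Next I would establish that $L_\Delta$ is primitive. The tridiagonal pattern together with $q_j>0$ forces $L_\Delta$ to be irreducible, since one can pass between any two states along the chain of strictly positive off-diagonal entries; the strictly positive diagonal entries then guarantee aperiodicity, so some power $L_\Delta^k$ is entrywise positive. Perron--Frobenius for primitive matrices now gives that $\rho(L_\Delta)=1$ is a simple eigenvalue with the positive eigenvector $\bar{\bp}$, while every other eigenvalue $\lambda$ satisfies $|\lambda|<1$.

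Asymptotic stability then follows from a spectral splitting. The hyperplane $V := \{v\in\mathbb{R}^N : \boldsymbol{1}_N^T v = 0\}$ is invariant under $L_\Delta$ because $\boldsymbol{1}_N^T L_\Delta = \boldsymbol{1}_N^T$ by \eqref{E-oneQ2zero}, and the spectrum of the restriction $L_\Delta|_V$ is precisely $\sigma(L_\Delta)\setminus\{1\}$, hence $\rho(L_\Delta|_V)<1$. For any initial $\bp^{(0)}\in\Sigma_N$ the error $\bp^{(0)}-\bar{\bp}$ lies in $V$, so
\[
\bp^{(n)}-\bar{\bp} = L_\Delta^{\,n}\bigl(\bp^{(0)}-\bar{\bp}\bigr)\longrightarrow \boldsymbol{0}
\]
geometrically, which yields both Lyapunov stability and global attraction in $\Sigma_N$.

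I expect the main obstacle to be the strict inequality $|\lambda|<1$ for the non-Perron eigenvalues, that is, ruling out unimodular eigenvalues other than $1$; this is exactly the point where aperiodicity (guaranteed by the positive diagonal, itself a consequence of choosing $\Delta$ small) is indispensable. A cleaner-but-optional route specific to the tridiagonal case is to observe that the sign pattern $q_{2j-1}q_{2j}>0$ lets one symmetrize $Q$ by a diagonal similarity, forcing the whole spectrum of $L_\Delta$ to be real and thus confined to $(-1,1]$; but the primitivity argument above is self-contained and matches the Perron--Frobenius remark preceding the theorem.
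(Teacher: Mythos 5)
Your proof is correct, but it takes a genuinely different route from the paper's. The paper proves the spectral facts directly for $Q$: Gershgorin's theorem applied to the \emph{columns} of $Q$ places every eigenvalue in a disc of the form $\{z : |z+c|\le c\}$, so all nonzero eigenvalues of $Q$ have strictly negative real part; simplicity of the eigenvalue $0$ is obtained by excluding generalized eigenvectors (an equation $Q\bar{\bx}=\bar{\bp}$ is impossible because the components on the left sum to $0$ while those on the right sum to $1$); then for small $\Delta$ the matrix $L=I_N+\Delta Q$ has the simple eigenvalue $1$ with all other eigenvalues inside the unit disc, and the convergence $L^k\bp^{(0)}\to\bar{\bp}$ is extracted from the Jordan canonical form, using the invariance of $\Sigma_N$ at the very end to identify the limiting scalar multiple of $\bar{\bp}$ as $\bar{\bp}$ itself. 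You instead treat $L_\Delta$ as a column-stochastic nonnegative matrix, establish primitivity (irreducibility from the positive off-diagonal entries, aperiodicity from the positive diagonal), invoke Perron--Frobenius for primitive matrices, and then contract the error inside the invariant hyperplane $V=\{v:\boldsymbol{1}_N^Tv=0\}$. This is exactly the route the paper gestures at before the theorem (``The Perron--Frobenius theorem applies\dots'') but deliberately declines to follow (``In fact, we can show these properties directly''). Your approach buys a cleaner endgame: the splitting $\mathbb{R}^N=\mathrm{span}(\bar{\bp})\oplus V$ gives geometric decay of $\bp^{(n)}-\bar{\bp}$ with a uniform rate, yielding Lyapunov stability and global attraction in one stroke and avoiding both the Jordan-form limit and the final identification trick; your explicit smallness condition on $\Delta$ also parallels the condition $\Delta<\frac{1}{2\beta}$ used later in the random case. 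The paper's approach buys self-containedness, needing only Gershgorin and elementary linear algebra rather than the Perron--Frobenius theory for primitive matrices. One small repair to yours: the aperiodicity argument needs \emph{strictly} positive diagonal entries, so you should require $\Delta<1/\max_i|Q_{ii}|$ with strict inequality rather than $\le$; this is harmless, since $\Delta$ sufficiently small is a standing assumption.
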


\begin{proof}
First note that by Geshgorin's theorem applied to columns the eigenvalues of
the matrix $Q$ lie in the union of the closed discs centered on $-q_1 + 0
\imath, \ldots, -q_{2j-2}-q_{2j-1}+ 0 \imath, \ldots,  - q_{2N-2} +0 \imath$
in the complex plane with respective radii $q_1, \ldots, q_{2j-2}+q_{2j-1},
\ldots, q_{2N-2}$. These all contain the origin $0 + 0 \imath$ on their
boundary, but otherwise lie in the negative real part of the complex plan.
It is already known that $0$ is an eigenvalue, so all other eigenvalues have
strictly negative real parts. Moreover, $0$ is a simple eigenvalue with the
positive eigenvector $\bar{\bp}$.

It is easy to show that no generalized eigenvectors exist, since one would
satisfy the equation $Q\bar{\bx} = 0 \bar{\bx} +\bar{\bp}$, i.e.,
$Q\bar{\bx} = \bar{\bp}$, which is impossible since the sum of components on
the left hand side is equal to $0$, while the sum on the right side is equal
to $1$.

It follows that $\bar{\bp}$ is an eigenvector of the matrix $L =I_N+ \Delta
Q$ corresponding to the simple eigenvalue $\lambda= 1$. Moreover, if
$\Delta$ is small enough, then all other eigenvalues of the matrix $L$ lie
inside the unit disc in the complex plane, i.e., satisfy $|\lambda| < 1$. It
then follows that all solutions $\bp^{(n)}$ of the difference equation
\eqref{DDE} on $\Sigma_N$ converge to $\bar{\bp}$.

This can be shown by adapting the proof of Theorem 10.9 in
\cite{NobleDaniel:77}. Consider the Jordan canonical decomposition $L Q = Q
J$, where $Q$ is the matrix of eigenvectors and generalized eigenvectors of
the matrix $L$ with $\bar{\bp}$ as its first column corresponding to a $1
\times 1$ Jordan block $[1]$ and the other Jordan blocks corresponding to
the other eigenvalues with $|\lambda| < 1$. Then $J^k$ converges to an $N
\times N$ matrix $Z$ with $z_{1,1} = 1$ and all other components $z_{i,j} =
0$. This implies that
\[
L^k \bp^{(0)}= QJ^k Q^{-1} \bp^{(0)}
\to Q Z Q^{-1} \bp^{(0)}
\]
for any $\bp^{(0)} \in \Sigma_N$. Now $QZ
=[\bar{\bp}|\boldsymbol{0}|\ldots|\boldsymbol{0}]$, so $Q Z Q^{-1}
\bp^{(0)}$ is a scalar multiple of $\bar{\bp}$. But $L$ maps $\Sigma_N$ into
itself, so this scalar multiple of $\bar{\bp}$ is, in fact, $\bar{\bp}$
itself, i.e., $L^k \bp^{(0)} \to \bar{\bp}$ as $k \to \infty$ for all
$\bp^{(0)} \in \Sigma_N$.
\end{proof}

\section{Random attractors of uniformly contracting
cocycles}\label{S-CoMetric}

Let $M$ be a complete metric space equipped with the metric $\rho$.

\begin{definition}\label{D-cocycle}
A map $F : \mathbb{Z}^{+}\times\Omega\times M \to M$ is called a
(discrete-time) cocycle on $M$ with respect to the driving system $\Theta$
if it satisfies the \emph{initial condition}
\[
F(0,\omega,x)= x,\quad x\in M,~ \omega\in\Omega,
\]
and the \emph{cocycle property}
\begin{equation}\label{E-cocycle}
F(n+m,\omega,x) =
F(n,\theta_{m}\omega,F(m,\omega,x)),\quad
x\in M,~\omega\in\Omega,~ m,n\in\mathbb{Z}^{+}.
\end{equation}
The pair $(\Theta,F)$ is called a (discrete-time) random dynamical system in
\cite{ArnoldL:98}.
\end{definition}

Define $f(\omega,x) := F(1,\omega,x)$. Then, clearly, due to cocycle
property \eqref{E-cocycle} for any $n \in \mathbb{Z}^{+}$ the map
$F(n,\omega,x)$ can be expressed as a superposition of maps $f(\omega,x)$
for different $\omega$ and $x$:
\begin{equation}\label{E-Frep}
 F(n,\omega,x)=f(\theta_{n-1}\omega,F(n-1,\omega,x))=
 f(\theta_{n-1}\omega,\dots f(\theta_{1}\omega,f(\theta_{0}\omega,x))\dots).
\end{equation}
The map $f(\omega,x)$ is called the \emph{generator} of the cocycle
$F(n,\omega,x)$.

In what follow it will be supposed that the cocycle $F(n,\omega,x)$ is
continuous in $x$ for every $n \in \mathbb{Z}^{+}$ and $\omega\in \Omega$.
By \eqref{E-Frep} the cocycle $F(n,\omega,x)$ is continuous in $x$ if its
generator $f(\omega,x)$ is continuous in $x$ for every $\omega \in \Omega$.

An $\mathcal{F}$-measurable family $\mathscr{A} = \{A_{\omega},
\omega\in\Omega\}$ of nonempty compact subsets of $M$  is the family  of
image sets of an  $\mathcal{F}$-measurable set valued mapping $\omega$
$\mapsto$ $A_{\omega}$,  i.e.,  for which  the real valued mapping $\omega$
$\mapsto$ $\dist_{\rho}(x,A_{\omega})$ is $\mathcal{F}$-measurable for each
$x$ $\in$ $M$, see \cite[Theorem 8.1.4]{AuFrank:09}. It is called
\emph{$f$-invariant} if $f(\omega,A_{\omega}) = A_{\theta \omega}$, and
hence
\[
F(n,\omega, A_{\omega}) = A_{\theta_n \omega}, \quad \omega \in \Omega,~
n \in\mathbb{Z}^{+}.
\]

Recall that the \emph{Hausdorff separation}, or semi--metric,
$H_{\rho}^{*}(X,Y)$, of the nonempty compact subsets $X$ and $Y$ of $M$ is
defined by
\[
H_{\rho}^{*}(X,Y) := \max_{x\in X} \mbox{\rm dist}_{\rho}(x,Y),
\]
where $\dist_{\rho}(x,Y) := \min_{y\in Y}\rho(x,y)$, and the \emph{Hausdorff
metric} $H_{\rho}(X,Y)$ for the nonempty compact subsets $X$ and $Y$ of $M$
is
\[
H_{\rho}(X,Y) := \max\left\{H_{\rho}^{*}(X,Y),H_{\rho}^{*}(Y,X)\right\}.
\]
Finally, the \emph{diameter} of a subset $X$ of $M$ is defined by $\diam(X)
:= \sup_{x,y\in X}\rho(x,y)$.

\begin{definition}\label{pba}

An $\mathcal{F}$-measurable family $\mathscr{A} =\{A_{\omega},
\omega\in\Omega\}$ of nonempty compact subsets of $M$ is called a
\emph{random attractor} if it (i) is $F$--invariant, (ii) pullback attracts
nonempty bounded subsets of $M$, i.e.,
\[
H_{\rho}^{*}(F(n,\theta_{-n} \omega,D),A_{\omega}) \to 0 \quad \textrm{as}\quad n
\to \infty,
\]
for all $\omega \in \Omega$ and nonempty bounded subsets $D$ of $M$, and
(iii) is the minimal family (under inclusion) satisfying (i) and (ii).
\end{definition}

\begin{definition}\label{D-dissip}
A cocycle $F : \mathbb{Z}^{+}\times\Omega\times M \to M$ is called
\emph{uniformly dissipative} if there exist a number $N_{d}
\in\mathbb{Z}^{+}$ and a closed bounded set $M_{0} \subset M$ such that
\begin{equation}\label{E-diss}
F(N_{d},\omega,M)\subseteq M_{0},\quad \forall~ \omega\in\Omega.
\end{equation}
\end{definition}

\begin{definition}\label{D-cont}
A cocycle $F : \mathbb{Z}^{+}\times\Omega\times M \to M$ is called
\emph{uniformly contractive} if there exist a number $N_{c}
\in\mathbb{Z}^{+}$ and a number $\lambda \in [0,1)$ such that
\begin{equation}\label{E-contr}
\rho(F(N_{c},\omega,x),F(N_{c},\omega,y))\le\lambda \rho(x,y),\quad
\forall~ \omega\in\Omega, ~x,y\in M.
\end{equation}
\end{definition}

We can now formulate an abstract theorem on existence of a random attractor.

\begin{theorem}\label{T-metric}
Let $F : \mathbb{Z}^{+}\times\Omega\times M \to M$ be a uniformly
dissipative and uniformly contractive cocycle. Then it has a random
attractor $\mathscr{A} = \{A_{\omega}, \omega\in\Omega\}$. Moreover, the set
$A_{\omega}$ consists of a single point for each $\omega \in \Omega$.
\end{theorem}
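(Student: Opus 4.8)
The plan is to construct the attractor fibrewise as a pullback limit and to show each fibre is a single point by playing uniform dissipativity \eqref{E-diss} against uniform contractivity \eqref{E-contr}. First I would iterate the contraction: from the cocycle property \eqref{E-cocycle}, written as $F(kN_c,\omega,\cdot)=F(N_c,\theta_{(k-1)N_c}\omega,F((k-1)N_c,\omega,\cdot))$, together with \eqref{E-contr}, one obtains by induction on $k$ that $\rho(F(kN_c,\omega,x),F(kN_c,\omega,y))\le\lambda^{k}\rho(x,y)$ for all $\omega\in\Omega$ and $x,y\in M$, each block of $N_c$ steps contributing one factor $\lambda$. Writing $D:=\diam(M_0)<\infty$, dissipativity gives $\diam F(N_d,\omega,M)\le D$; factoring $F(N_d+kN_c,\omega,\cdot)=F(kN_c,\theta_{N_d}\omega,F(N_d,\omega,\cdot))$ and using $F(N_d,\omega,M)\subseteq M_0\subseteq M$ then yields the key estimate $\diam F(N_d+kN_c,\omega,M)\le\lambda^{k}D$, which tends to $0$ as $k\to\infty$, uniformly in $\omega$.

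Next I would construct the pullback limit. Fix any $x_0\in M$ and set $a_n(\omega):=F(n,\theta_{-n}\omega,x_0)$. For $n\ge N_d$, write $n=N_d+kN_c+r$. The cocycle property gives $a_{n+m}(\omega)=F(n,\theta_{-n}\omega,F(m,\theta_{-(n+m)}\omega,x_0))$, so $a_{n+m}(\omega)$ and $a_n(\omega)$ are images under the single map $F(n,\theta_{-n}\omega,\cdot)$ of two points of $M$; hence $\rho(a_{n+m}(\omega),a_n(\omega))\le\diam F(n,\theta_{-n}\omega,M)\le\lambda^{k}D$. Thus $(a_n(\omega))$ is Cauchy, and by completeness of $M$ it converges to a limit $a_\omega$; the same diameter estimate shows $a_\omega$ is independent of the choice of $x_0$. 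I set $A_\omega:=\{a_\omega\}$, which is compact.

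Then I would verify the defining properties. For invariance I use continuity of $F(1,\omega,\cdot)$ and the identity $F(1,\omega,F(n,\theta_{-n}\omega,x_0))=F(n+1,\theta_{-n}\omega,x_0)$ from \eqref{E-cocycle}; letting $n\to\infty$ and recognising the right-hand side as the defining sequence for $a_{\theta\omega}$ gives $F(1,\omega,a_\omega)=a_{\theta\omega}$, i.e. $f$-invariance, whence $F$-invariance by iteration. For pullback attraction, for any bounded $D'\subseteq M$ and $x\in D'$ the triangle inequality with $\rho(F(n,\theta_{-n}\omega,x),a_n(\omega))\le\diam F(n,\theta_{-n}\omega,M)\le\lambda^{k}D$ and $a_n(\omega)\to a_\omega$ gives $H_\rho^{*}(F(n,\theta_{-n}\omega,D'),A_\omega)\to0$. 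For minimality, if $\mathscr{B}=\{B_\omega\}$ also satisfies (i)--(ii), then for $b\in B_{\theta_{-n}\omega}$ invariance gives $F(n,\theta_{-n}\omega,b)\in B_\omega$, and this sequence converges to $a_\omega$; since $B_\omega$ is closed, $a_\omega\in B_\omega$, so $A_\omega\subseteq B_\omega$. Measurability of $\omega\mapsto a_\omega$ follows as it is a pointwise limit of the measurable maps $\omega\mapsto a_n(\omega)$, so $\omega\mapsto\dist_\rho(x,A_\omega)=\rho(x,a_\omega)$ is $\mathcal{F}$-measurable.

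The main obstacle I expect is the bookkeeping in the key diameter estimate: correctly combining the two distinct time scales $N_d$ (dissipativity) and $N_c$ (contractivity) through the cocycle property, while ensuring that the contraction factor is applied to the already-dissipated image lying in $M_0$. Once the uniform-in-$\omega$ bound $\diam F(N_d+kN_c,\omega,M)\le\lambda^{k}D\to0$ is secured, the construction of the pullback limit and the verification of the attractor axioms are routine.
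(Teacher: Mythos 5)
Your proof is correct, and its mathematical engine is the same as the paper's: combine uniform dissipativity with iterated uniform contractivity through the cocycle property to show that the pullback images $F(n,\theta_{-n}\omega,M)$ have diameter at most $\lambda^{k}\diam(M_{0})$ once $n\ge N_{d}+kN_{c}$, uniformly in $\omega$. The packaging differs, though. The paper shows the sets $M_{n}(\omega)=F(n,\theta_{-n}\omega,M)$ are nested under inclusion, proves the recursion $d_{n+N_{c}}\le\lambda d_{n}$ for $n\ge N_{d}$, where $d_{n}=\sup_{\omega}\diam(M_{n}(\omega))$, and obtains $A_{\omega}$ as the Cantor intersection $\bigcap_{n}M_{n}(\omega)$, essentially stopping there; invariance, pullback attraction, minimality and measurability of the limiting family are left implicit. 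You instead take the limit of the Cauchy sequence $a_{n}(\omega)=F(n,\theta_{-n}\omega,x_{0})$ --- in effect reproving the Cantor intersection property --- and this buys you explicit verifications of $f$-invariance (via continuity in $x$ and $a_{n+1}(\theta\omega)=F(1,\omega,a_{n}(\omega))$), of pullback attraction, of minimality, and of measurability, all of which the paper's proof omits (both arguments tacitly use measurability of $\omega\mapsto F(n,\omega,x_{0})$, which is part of the random dynamical system structure rather than of Definition~\ref{D-cocycle}). One detail you should tighten: your key estimate is derived for $n=N_{d}+kN_{c}$ exactly, yet you invoke it for $n=N_{d}+kN_{c}+r$. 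The fix is to put the remainder \emph{first} in time, $F(n,\omega',\cdot)=F(N_{d}+kN_{c},\theta_{r}\omega',F(r,\omega',\cdot))$, so that $F(n,\omega',M)\subseteq F(N_{d}+kN_{c},\theta_{r}\omega',M)$ and the bound $\lambda^{k}\diam(M_{0})$ carries over; putting the remainder last would fail, since a single cocycle step carries no Lipschitz bound in general. Equivalently, one can observe, as the paper does, that the pullback sets $M_{n}(\omega)$ are nested, so their diameters are non-increasing in $n$.
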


\begin{proof}
Fix numbers $N_{d}$, $N_{c} \ge 1$ and $\lambda < 1$ and a closed bounded
set $M_{0}$ for which \eqref{E-diss} and \eqref{E-contr} hold.

First, it will be shown that for any bounded set $D \subseteq M$ and any
$\omega \in \Omega$ the sets $F(n,\theta_{-n} \omega,D)$ converge to some
single-point set $A_{\omega}$ as $n \to \infty$. To do this, in fact, an
even stronger statement will be proved: for any $\omega \in \Omega$ the sets
\[
M_{n}(\omega):=F(n,\theta_{-n}\omega,M)
\]
converge to some single-point set $A_{\omega}$ as $n \to \infty$.

Note that for each $\omega$ the sets $M_{n}(\omega)$ are closed as images of
the closed set $M_{0}$ under continuous maps $F(n,\theta_{-n}\omega,\cdot)$.
Moreover, for any $\omega \in \Omega$ the sequence of sets $M_{n}(\omega)$
is nested under inclusion:
\begin{equation}\label{E-monot}
 M_{n+1}(\omega)\subseteq M_{n}(\omega),\quad n\ge 1.
\end{equation}
Indeed, by \eqref{E-cocycle}
\[
M_{n+1}(\omega):= F(n+1,\theta_{-(n+1)}\omega,M)=
F(n,\theta_{1}(\theta_{-(n+1)}\omega),F(1,\theta_{-(n+1)}\omega,M)).
\]
Clearly $\theta_{1}(\theta_{-(n+1)}\omega) = \theta_{-n}\omega$ and
$F(1,\theta_{-(n+1)}\omega,M) \subseteq M$, so
\[
M_{n+1}(\omega) :=F(n+1,\theta_{-(n+1)}\omega,M)\subseteq
F(n,\theta_{-n}\omega,M)=:M_{n}(\omega),
\]
and \eqref{E-monot} holds.

Set
\[
 d_{n}=\sup_{\omega\in\Omega}\diam (M_{n}(\omega)).
\]
It will be shown that
\begin{equation}\label{E-mondiam}
 d_{n+1}\le d_{n}\le \diam(M_{0}),\quad n\ge N_{d}.
\end{equation}
The inequality $d_{n+1} \le d_{n}$ follows from \eqref{E-monot} provided
that the both numbers $d_{n+1}$ and $d_{n}$ are finite. Thus
\eqref{E-mondiam} follows if it can be shown that
\[
 d_{N_{d}}\le \diam(M_{0}).
\]
This last inequality readily follows from the inclusion
\[
M_{N_{d}}(\omega)=F(N_{d},\theta_{-N_{d}}\omega,M)\subseteq M_{0},
\]
which is a direct corollary of \eqref{E-diss}.

The inequalities \eqref{E-mondiam} are thus proved, but now they will be
 strengthened to
\begin{equation}\label{E-mondiamL}
 d_{n+N_{c}}\le \lambda d_{n},\quad n\ge N_{d}.
\end{equation}
To prove this inequality note that by \eqref{E-cocycle}
\begin{eqnarray*}
M_{N_{c}+n}(\omega) & = & F(N_{c}+n,\theta_{-(N_{c}+n)}\omega,M)
\\[1.5ex]
& = & F(N_{c},\theta_{n}(\theta_{-(N_{c}+n)}\omega),F(n,\theta_{-(N_{c}+n)}\omega,M)).
\end{eqnarray*}
Here $F(n,\theta_{-(N_{c}+n)}\omega,M)=M_{n}(\theta_{-N_{c}}\omega)$, so
\[M_{N_{c}+n}(\omega)=F(N_{c},\theta_{-N_{c}}\omega,M_{n}(\theta_{-N_{c}}\omega))
\]
and, by the uniform contractivity of the cocycle $F(n,\omega,x)$,
\[
\diam\left(M_{N_{c}+n}(\omega)\right)\le \lambda \diam\left(M_{n}(\theta_{-N_{c}}\omega)\right).
\]
Taking the supremum over all $\omega \in \Omega$ in the above inequality
then gives \eqref{E-mondiamL}.

To finalize the proof of the theorem it remains to note that, for any given
$\omega \in \Omega$, the sequence of closed sets $\{M_{n}(\omega)\}$ is
nested under inclusion and, by \eqref{E-mondiam} and \eqref{E-mondiamL}, the
diameters of the sets $M_{n}(\omega)$ tend to zero as $n \to \infty$. Then,
by the Cantor Intersection Theorem (or Property), see, e.g.,
\cite[Th.~13.65]{TBB:08}, the intersection
\[
A_{\omega}= \bigcap_{n\geq 1} M_{n}(\omega)
\]
is nonempty and consists of exactly one point.
\end{proof}

\begin{remark}\label{Rem0}
The proof of Theorem~\ref{T-metric} implies that the component sets
$A_{\omega}$ of the random attractor $\mathscr{A} = \{A_{\omega},
\omega\in\Omega\}$ satisfy the inclusion $A_{\omega}\subseteq M_{0}$ for all
$\omega\in\Omega$.
\end{remark}

\begin{remark}\label{Rem1}
To prove Theorem~\ref{T-metric} it would suffice to require that
\eqref{E-contr} holds only for $x$, $y \in M_{0}$, provided that the cocycle
$F$ is uniformly dissipative.
\end{remark}

\begin{remark}\label{Rem2}
It would be preferable to formulate properties of dissipativity and
contractivity for a cocycle not in terms of the map $F(n,x,y)$, but in terms
of its generator $f(\omega,x)$. As will be seen below, in general, this is
not possible in some interesting and natural applications, where the arising
cocycle is uniformly dissipative and contractive, whereas neither the
dissipativity for $N_{d} =1$ nor the contractivity for $N_{c} = 1$ holds.
\end{remark}

\begin{remark}\label{Rem3}
Theorem~\ref{T-metric} has been formulated under rather severe assumptions.
These can be essentially weakened, but serve perfectly well for the purposes
of this paper.
\end{remark}

\section{The Hilbert metric and Birkhoff's theorem}\label{S-HilbBirk}

To apply the results from Section~\ref{S-CoMetric} to linear cocycles
generated by tridiagonal Markov chains \eqref{RDE}, first recall some
auxiliary facts (see, e.g., \cite{BVW:10,Bush:ARMA73,KLS:PosLinSys:e})
following the work \cite{ADFP:93}.

Denote by $\coneN$ the cone\footnote{Recall, see, e.g.
\cite{KLS:PosLinSys:e}, that a set $K$ in a Banach space is called a
\emph{cone} if it is convex, closed and $tK \subseteq K$ for any real $t \ge
0$, and $K\cap -K = \{0\}$.} of elements $x =(x_{1},x_{2},\ldots,x_{N})^T
\in \mathbb{R}^{N}$ with nonnegative components and by $\intconeN$ the
interior of $\coneN$, which is clearly non-empty. Then the quantity
\[
\vartheta(x,y)=\inf \big\{t:~ tx-y\in\coneN \big\}
\]
is finite valued for any $x$, $y \in \intconeN$.

\begin{definition}\label{D-HilbMet}
The function
\[
\rho_{H}(x,y)= \left|\ln\left(\frac{\vartheta(x,y)}{\vartheta(y,x)}\right)\right|
\]
is called the \emph{Hilbert projective metric} (or, sometimes, the Birkhoff
metric \cite{KLS:PosLinSys:e}).
\end{definition}

\begin{remark}
Definition~\ref{D-HilbMet} is applicable to cones in a general Banach space.
For the cone $\coneN$ in the finite-dimensional space $\mathbb{R}^{N}$, it
can be shown to be equal to
\begin{equation}\label{E-HRN}
\rho_{H}(x,y)=\left|\ln\left(\frac{\max_{i}y_{i}/x_{i}}{\max_{i}x_{i}/y_{i}}\right)\right|
\end{equation}
or
\[
\rho_{H}(x,y)=\left|
\max_{i}\left\{\ln y_{i}-\ln x_{i}\right\} -
\max_{i}\left\{\ln x_{i}-\ln y_{i}\right\}\right|
\]
for vectors $x = (x_{1},x_{2},\ldots,x_{N})^T$ and $y
=(y_{1},y_{2},\ldots,y_{N})^T$ in $\coneN$.
 \end{remark}

Observe that $\rho_{H}(x,y)$ satisfies the triangle inequality
\[
\rho_{H}(x,z)\le\rho_{H}(x,y)+\rho_{H}(y,x),\quad x,y,z\in\intconeN,
\]
whereas the relation $\rho_{H}(x,y)=0$ with $x,y\in\intconeN$ does not
implies the equality $x=y$, but only the equality $x = ty$ for some $t > 0$.
Moreover,
\[
\rho_{H}(sx,ty)=\rho_{H}(x,y),\quad \forall~s,t>0,~x,y\in\intconeN.
\]
Thus, strictly speaking, $\rho_{H}(x,y)$ is not a metric on $\intconeN$, but
only a semi-metric. It becomes a metric, however, on a projective space. An
important way to make it a proper metric is covered by the following
theorem.

\begin{theorem}\label{T-Hilb}
Let $X \subseteq \mathbb{R}^{N}$ be a closed, bounded set such that
$0\not\in X$ and any ray $\{tx: t>0\}$ with $x \in \intconeN$ intersects $X$
in at most one point. Then $(X\cap\intconeN, \rho_{H})$ is a metric space.

Moreover, if any ray $\{tx: t>0\}$ with $x \in \intconeN$ intersects $X$ in
exactly one point, then the metric space $(X\cap\intconeN, \rho_{H})$ is
complete.
\end{theorem}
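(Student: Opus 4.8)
The plan is to handle the two assertions in turn. For the metric-space claim, I would rely on the properties of $\rho_{H}$ recorded after Definition~\ref{D-HilbMet}: on $\intconeN$ the function $\rho_{H}$ is finite-valued, nonnegative, symmetric, and satisfies the triangle inequality, and all of these are inherited by its restriction to $X\cap\intconeN$. The only axiom that is not automatic is definiteness, because in general $\rho_{H}(x,y)=0$ forces merely $x=ty$ for some $t>0$ rather than $x=y$. This is exactly where the hypothesis enters: if $x,y\in X\cap\intconeN$ satisfy $\rho_{H}(x,y)=0$, then $x$ and $y$ lie on a common ray $\{ty:t>0\}$, which by assumption meets $X$ in at most one point, so $x=y$. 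Hence $\rho_{H}$ separates points and $(X\cap\intconeN,\rho_{H})$ is a metric space.

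For completeness I would argue by compactness. Since $X$ is closed and bounded it is compact, and since $0\notin X$ every Euclidean limit of points of $X$ is a nonzero point of $X$. Let $\{x^{(k)}\}\subset X\cap\intconeN$ be $\rho_{H}$-Cauchy; then it is $\rho_{H}$-bounded, say $\rho_{H}(x^{(k)},x^{(1)})\le C$ for all $k$. Passing to a subsequence, $x^{(k_{j})}\to x^{*}$ in the Euclidean norm with $x^{*}\in X$; as $x^{(k_{j})}\in\coneN$ and $\coneN$ is closed, also $x^{*}\in\coneN$, so $x^{*}$ can fail to lie in $\intconeN$ only if some coordinate $x^{*}_{i_{0}}$ vanishes. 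The decisive step is to rule this out. If $x^{*}_{i_{0}}=0$, then in the explicit formula \eqref{E-HRN} for $\rho_{H}(x^{(k_{j})},x^{(1)})$ the numerator $\max_{i}(x^{(1)})_{i}/(x^{(k_{j})})_{i}$ tends to $+\infty$ (its $i_{0}$-term blows up), while the denominator $\max_{i}(x^{(k_{j})})_{i}/(x^{(1)})_{i}$ remains bounded because $X$ is bounded; hence $\rho_{H}(x^{(k_{j})},x^{(1)})\to\infty$, contradicting the bound $C$. Therefore $x^{*}\in X\cap\intconeN$. This blow-up of the Hilbert metric at the boundary of the cone is the heart of the argument and the step I expect to require the most care to state cleanly.

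It then remains to upgrade to convergence of the full sequence. Because $x^{*}\in\intconeN$ and $x^{(k_{j})}\to x^{*}$ in Euclidean norm, each ratio $(x^{(k_{j})})_{i}/x^{*}_{i}\to 1$, so both maxima in \eqref{E-HRN} tend to $1$ and $\rho_{H}(x^{(k_{j})},x^{*})\to 0$. Given $\ep>0$, I would choose $K$ with $\rho_{H}(x^{(m)},x^{(n)})<\ep/2$ for $m,n\ge K$ (Cauchy property) and an index $k_{j}\ge K$ with $\rho_{H}(x^{(k_{j})},x^{*})<\ep/2$; the triangle inequality then yields $\rho_{H}(x^{(n)},x^{*})<\ep$ for all $n\ge K$, so $x^{(n)}\to x^{*}$ in $(X\cap\intconeN,\rho_{H})$ and the space is complete. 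I note that the completeness already follows from closedness and boundedness of $X$ together with $0\notin X$; the stronger hypothesis that every interior ray meets $X$ in exactly one point serves to make $X\cap\intconeN$ a genuine cross-section of $\intconeN$, as is needed for the applications (for instance with $X=\Sigma_{N}$).
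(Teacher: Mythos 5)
The paper itself gives no proof of Theorem~\ref{T-Hilb}: it is recalled as a known auxiliary fact with references to \cite{ADFP:93,Bush:ARMA73,KLS:PosLinSys:e}, so there is no internal argument to compare yours against; your proposal in effect supplies the missing proof, and it is the standard one. Both halves are structured correctly. Definiteness comes exactly from the at-most-one-point ray hypothesis, since vanishing of $\rho_{H}$ only forces proportionality; completeness comes from compactness of $X$ (closed and bounded in $\mathbb{R}^{N}$), the blow-up of the Hilbert metric as a coordinate degenerates (which confines every Euclidean cluster point of a $\rho_{H}$-Cauchy sequence to $\intconeN$), the fact that Euclidean convergence to an interior point implies $\rho_{H}$-convergence, and the usual Cauchy-plus-convergent-subsequence upgrade. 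Your closing observation --- that completeness already follows from closedness, boundedness and $0\notin X$, the ``exactly one point'' hypothesis serving only to make $X\cap\intconeN$ a nonempty cross-section of $\intconeN$ --- is also correct.

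One caveat, inherited from the paper rather than introduced by you: formula \eqref{E-HRN} as printed (a \emph{quotient} of the two maxima) is a misprint for the Hilbert metric, which is the logarithm of their \emph{product}. Taken literally, \eqref{E-HRN} vanishes on non-proportional pairs, e.g. $x=(1/3,2/3)$, $y=(2/3,1/3)$, so the prose property ``$\rho_{H}(x,y)=0$ implies $x=ty$'' invoked in your first step is inconsistent with the formula invoked in your blow-up step. The repair is one line: with
\[
\rho_{H}(x,y)=\ln\left(\max_{i}\frac{y_{i}}{x_{i}}\cdot\max_{i}\frac{x_{i}}{y_{i}}\right)
\]
the same blow-up argument works, except that the second factor must be bounded \emph{below} away from zero rather than above; this holds because $\max_{i}\bigl(x^{(k_{j})}\bigr)_{i}\to\max_{i}x^{*}_{i}>0$, which is precisely where the hypothesis $0\notin X$ enters. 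With both steps run against this corrected formula, your proof is complete and correct.
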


\begin{corollary}\label{C-simplex}
The interior $\intsimpN$ of the probability simplex $\simpN$ is the complete
metric space with the Hilbert projective metric.
\end{corollary}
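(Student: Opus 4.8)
The plan is to derive this directly from Theorem~\ref{T-Hilb} by taking the closed bounded set to be the whole simplex, $X = \simpN$. The first step is the identification $\simpN \cap \intconeN = \intsimpN$: a probability vector lies in the relative interior of the simplex exactly when all of its components are strictly positive, which is precisely the condition for membership in $\intconeN$. Hence verifying that $X = \simpN$ meets the hypotheses of the completeness part of Theorem~\ref{T-Hilb} will yield the claim.

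The routine checks come first. The set $\simpN$ is closed, being cut out by the single equation $\sum_{j} p_{j} = 1$ together with the inequalities $p_{j} \ge 0$; it is bounded, since it is contained in the unit cube $[0,1]^{N}$; and $0 \notin \simpN$ because the zero vector violates $\sum_{j} p_{j} = 1$.

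The one point requiring a short argument is the ray condition. Given any $x = (x_{1},\ldots,x_{N})^{T} \in \intconeN$, so that every $x_{j} > 0$, I would ask for which $t > 0$ one has $tx \in \simpN$. Since $\sum_{j}(tx)_{j} = t\,\|x\|_{1}$ with $\|x\|_{1} = \sum_{j} x_{j} > 0$, the requirement $tx \in \simpN$ forces $t = 1/\|x\|_{1}$, and this unique value produces the point $x/\|x\|_{1}$, which still has strictly positive components and so lies in $\intsimpN \subseteq \simpN$. Thus every ray $\{tx : t > 0\}$ issuing from a point of $\intconeN$ meets $\simpN$ in exactly one point, namely its $\ell^{1}$-normalization. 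This is the strong form of the hypothesis, so the ``moreover'' clause of Theorem~\ref{T-Hilb} applies and shows that $(\intsimpN, \rho_{H}) = (\simpN \cap \intconeN, \rho_{H})$ is a complete metric space.

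I do not expect any genuine obstacle here, as the corollary is essentially a specialization of Theorem~\ref{T-Hilb}. The only subtlety worth flagging is to confirm that the normalized vector lands in $\intconeN$ rather than merely in $\coneN$; this is immediate from $t > 0$ and $x_{j} > 0$, and it is exactly what upgrades the ``at most one'' intersection condition to the ``exactly one'' condition needed for completeness.
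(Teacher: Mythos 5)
Your proposal is correct and follows exactly the route the paper intends: the corollary is stated as an immediate consequence of Theorem~\ref{T-Hilb} applied with $X = \simpN$, using the identification $\simpN \cap \intconeN = \intsimpN$ and the fact that each ray from $\intconeN$ meets the simplex in exactly one point (its $\ell^{1}$-normalization). Your verification of closedness, boundedness, $0 \notin \simpN$, and the ``exactly one'' intersection condition is precisely what is needed, so nothing is missing.
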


\begin{remark}\label{R-unbound}
In general, the set $X\cap\intconeN$ in Theorem~\ref{T-Hilb} is unbounded in
metric $\rho_{H}$. Convergence in the metric space $(X\cap\intconeN,
\rho_{H})$ implies convergence with respect to any norm in $\mathbb{R}^{N}$.
\end{remark}

\begin{definition}\label{D-poslinop}
A linear operator (matrix) $L : \mathbb{R}^{N} \to\mathbb{R}^{N}$ is called
\emph{nonnegative} if $L : \coneN \to\coneN$, and \emph{positive} if $L :
\coneN\setminus\{0\} \to\intconeN$.\footnote{It is straightforward to show
that a matrix $L$ is nonnegative iff its components are nonnegative, and
positive iff its components are strictly positive.}
\end{definition}

\begin{definition}\label{D-posconsts}
If a linear operator (matrix) $L : \mathbb{R}^{N} \to\mathbb{R}^{N}$ is
positive, then its \emph{projective diameter} $\delta(L)$ is defined by
\[
\delta(L)=\sup\left\{\rho_{H}(Lx,Ly):~ x,y \in \intconeN\right\}
\]
and the \emph{contraction ratio} $\varkappa(L)$ of $L$ is defined by
\[
\varkappa(L)=\inf\left\{\lambda:~
\rho_{H}(Lx,Ly)\le\lambda\rho_{H}(x,y),~x,y\in\intconeN\right\}
\]
\end{definition}

\begin{theorem}[Birkhoff]\label{T-Birk}
If $L : \mathbb{R}^{N} \to \mathbb{R}^{N}$ is a positive linear operator
(matrix), then
\[
\varkappa(L)\le\tanh\left(\frac{1}{4}\delta(L)\right)<1.
\]
\end{theorem}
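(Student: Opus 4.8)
The plan is to prove Birkhoff's contraction estimate by reducing the action of $L$ on Hilbert distances to an elementary extremal problem for weighted averages. Fix $x,y\in\intconeN$ and write $z_j=x_j/y_j$ for the componentwise ratios. Using the explicit formula \eqref{E-HRN}, the source distance is $\rho_H(x,y)=\ln(p/q)$ with $p=\max_j z_j$ and $q=\min_j z_j$. The first step is the observation that each ratio of the images factors as a convex average of the $z_j$:
\[
\frac{(Lx)_i}{(Ly)_i}=\frac{\sum_j L_{ij}y_j\,z_j}{\sum_j L_{ij}y_j}=\sum_j w_{ij}z_j,\qquad w_{ij}=\frac{L_{ij}y_j}{\sum_k L_{ik}y_k},
\]
so that $(Lx)_i/(Ly)_i\in[q,p]$ for every $i$; this already recovers the non-strict bound $\rho_H(Lx,Ly)\le\rho_H(x,y)$ and shows the whole problem is governed by how the weight vectors $w_{i\cdot}$ differ across rows.

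Next I would localise to the two rows $i,i'$ realising the maximum and minimum of $(Lx)_i/(Ly)_i$, and set $r_j=L_{ij}/L_{i'j}$. A short computation rewrites the extremal quotient as a correlation ratio
\[
\frac{(Lx)_i/(Ly)_i}{(Lx)_{i'}/(Ly)_{i'}}=\frac{\sum_j r_j\,w_{i'j}z_j}{\bigl(\sum_j r_j\,w_{i'j}\bigr)\bigl(\sum_j w_{i'j}z_j\bigr)},
\]
the ratio of the $w_{i'\cdot}$-average of $rz$ to the product of the separate averages. The crucial point is that the oscillation of $r$ is controlled by the projective diameter: since $\max_{j,j'}\ln\frac{L_{ij}L_{i'j'}}{L_{i'j}L_{ij'}}$ is exactly a four-index column cross-ratio, it is bounded by $\delta(L)$, whence $\max_j r_j/\min_j r_j\le e^{\delta(L)}$. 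Because the entries of $L$ are strictly positive, $\delta(L)$ is finite, which will eventually furnish the strict inequality.

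The remaining task is a finite-dimensional optimisation: maximise the correlation ratio above over all nonnegative weights, all $z_j\in[q,p]$, and all $r_j$ with oscillation at most $e^{\delta(L)}$. I expect the extremum to be attained by two-point configurations in which $z$ takes only the values $q,p$ and $r$ only the values $r_-,r_+=r_-e^{\delta(L)}$, perfectly positively correlated with masses $\mu$ and $1-\mu$. Optimising the resulting single-variable expression gives the critical mass $\mu^\ast=\sqrt{D}/(\sqrt{D}+1)$ with $D=e^{\delta(L)}$, and, after passing to logarithms and using the identity
\[
\frac{e^{\delta(L)/2}-1}{e^{\delta(L)/2}+1}=\tanh\!\Bigl(\tfrac14\delta(L)\Bigr),
\]
yields $\varkappa(L)\le\tanh(\delta(L)/4)$; finiteness of $\delta(L)$ then forces this bound to be strictly less than $1$.

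The main obstacle I anticipate is this last extremal step. One must justify that the supremum is genuinely achieved in the two-point regime, and, in order to pass from the infinitesimal contraction ratio at $\rho_H(x,y)=0$ to a bound valid for all $x,y$, verify that the worst-case image distance is a \emph{concave} function of the source distance $\rho_H(x,y)$ vanishing at the origin. Concavity together with vanishing at $0$ makes the difference quotient nonincreasing, so its limiting slope at the origin — computed above to be $\tanh(\delta(L)/4)$ — dominates the ratio uniformly, which is precisely the estimate defining $\varkappa(L)$.
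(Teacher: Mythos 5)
The paper never proves Theorem~\ref{T-Birk}: it is recalled as a classical result with a pointer to \cite{Bush:ARMA73,KLS:PosLinSys:e}, so there is no internal proof to compare yours against, and your proposal must be judged on its own merits. On those merits, your plan is the classical Hopf--Birkhoff averaging argument, and it is correct and completable: I checked that both steps you flag as obstacles do close. The two-point reduction is routine: for fixed weights your correlation ratio is a quotient of functions affine in each $z_j$ and in each $r_j$ separately, hence monotone in every coordinate, so the supremum is attained with $z_j\in\{q,p\}$ and $r_j\in\{r_-,r_+\}$; among the resulting four-atom configurations the numerator $\sum_j w_jr_jz_j$ increases in the coupling mass when the marginals are held fixed (its coefficient is $(r_+-r_-)(p-q)\ge0$) while the denominator depends only on the marginals, and a final monotonicity check in the marginal masses removes the anti-correlated atoms. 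Maximizing the resulting
\[
\Phi(\mu)=\frac{\mu ab+1-\mu}{(\mu a+1-\mu)(\mu b+1-\mu)},\qquad a=e^{\delta(L)},\quad b=e^{s},\quad s=\rho_{H}(x,y),
\]
gives $\mu^{\ast}=1/(\sqrt{ab}+1)$ and the worst-case image distance
\[
g(s)=2\ln\frac{\sqrt{a}\,e^{s/2}+1}{\sqrt{a}+e^{s/2}},\qquad
g'(s)=\frac{a-1}{2\sqrt{a}\cosh(s/2)+a+1},
\]
so $g$ is indeed concave with $g(0)=0$ and $g'(0)=(\sqrt{a}-1)/(\sqrt{a}+1)=\tanh\bigl(\tfrac14\delta(L)\bigr)$; your chain $\rho_{H}(Lx,Ly)\le g(s)\le\tanh\bigl(\tfrac14\delta(L)\bigr)\,s$ follows, and strictness comes from $\delta(L)<\infty$, e.g.\ via the identity $\delta(L)=\max_{j,k}\rho_{H}(Le_{j},Le_{k})$ valid for strictly positive matrices.

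Three small repairs are needed. First, your critical mass $\sqrt{D}/(\sqrt{D}+1)$ is only the $s\to0$ limit of $1-\mu^{\ast}$; the true optimizer depends on $s$, which is harmless for your scheme (slope at the origin plus concavity) but should not be stated as the general maximizer. Second, your identity $\rho_{H}(x,y)=\ln(p/q)$ is the correct Hilbert metric, but it is not what \eqref{E-HRN} literally says: as printed, \eqref{E-HRN} (and Definition~\ref{D-HilbMet}) contain a quotient where a product should stand, and with that reading the quantity vanishes on some non-proportional pairs; you should therefore derive $\ln(p/q)$ from the product $\vartheta(x,y)\vartheta(y,x)$ rather than cite \eqref{E-HRN}. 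Third, the bound $\max_j r_j/\min_j r_j\le e^{\delta(L)}$ compares column cross-ratios with a supremum that the paper takes over interior points, while the columns $Le_j$ are images of boundary points of $\coneN$; a one-line continuity/density remark (or the identity for $\delta(L)$ just quoted) closes that gap.
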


Given a positive linear operator (matrix) $L : \mathbb{R}^{N}\to
\mathbb{R}^{N}$ denote by $\Tilde{L}(\cdot)$ the (nonlinear) operator
defined by
\[
\Tilde{L}(x)=P(Lx),\quad x\in\simpN,
\]
where $P$ is the projection operator to the simplex $\simpN$ defined by
\[
P(x)=\frac{1}{x_{1}+x_{2}+\dots+x_{N}}x,\quad x\in\coneN.
\]

\begin{theorem}\label{T-contsimp}
$\Tilde{L}(\cdot)$ is a contracting mapping with the contraction constant
$\varkappa(L)$ on the metric space $(\intsimpN,\rho_{H})$.
\end{theorem}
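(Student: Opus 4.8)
The plan is to show that the normalised map $\Tilde{L}(\cdot) = P(L\,\cdot)$ is a contraction on $(\intsimpN,\rho_H)$ with constant $\varkappa(L)$ by reducing the estimate directly to the behaviour of $L$ on $\intconeN$ and invoking the key projective invariance of the Hilbert metric. The central observation is that $\rho_H$ is scale-invariant: $\rho_H(sx,ty)=\rho_H(x,y)$ for all $s,t>0$. Since $P(x)$ is by definition a positive scalar multiple of $x$, this means that projecting onto the simplex never changes the Hilbert distance between two points.

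First I would verify that $\Tilde{L}$ actually maps $\intsimpN$ into itself, so the statement even makes sense. For $x\in\intsimpN\subseteq\intconeN\setminus\{0\}$, positivity of $L$ (Definition~\ref{D-poslinop}) gives $Lx\in\intconeN$, so all components of $Lx$ are strictly positive; hence $P(Lx)$ is well-defined and lies in $\intsimpN$. Thus $\Tilde{L}:\intsimpN\to\intsimpN$.

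Next comes the main computation, which is really just an application of scale-invariance. Take any $x,y\in\intsimpN$. By the definition of the contraction ratio $\varkappa(L)$ in Definition~\ref{D-posconsts}, we have
\[
\rho_H(Lx,Ly)\le\varkappa(L)\,\rho_H(x,y),\qquad x,y\in\intconeN,
\]
since $\intsimpN\subseteq\intconeN$. Now I would use the scale-invariance of $\rho_H$ twice: because $\Tilde{L}(x)=P(Lx)$ is a positive multiple of $Lx$ and $\Tilde{L}(y)=P(Ly)$ is a positive multiple of $Ly$, we get
\[
\rho_H\bigl(\Tilde{L}(x),\Tilde{L}(y)\bigr)=\rho_H\bigl(P(Lx),P(Ly)\bigr)=\rho_H(Lx,Ly)\le\varkappa(L)\,\rho_H(x,y).
\]
This establishes the contraction estimate. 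By Birkhoff's theorem (Theorem~\ref{T-Birk}), $\varkappa(L)\le\tanh(\tfrac14\delta(L))<1$, so the constant is genuinely less than one, confirming $\Tilde{L}$ is a strict contraction. Finally, since by Corollary~\ref{C-simplex} the space $(\intsimpN,\rho_H)$ is a complete metric space, $\rho_H$ is a bona fide metric there (not merely a semi-metric), so the contraction statement is meaningful in the metric-space sense.

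I do not expect any serious obstacle here: the result is essentially a one-line consequence of the projective invariance of the Hilbert metric combined with the definition of $\varkappa(L)$. The only point requiring a little care is ensuring $P$ is applied only to vectors in $\intconeN$ (so that the normalisation is by a strictly positive sum and stays inside the interior of the simplex), which is exactly guaranteed by the positivity of $L$. The slightly subtle conceptual point worth flagging is that $\rho_H$ is only a semi-metric on $\intconeN$ but becomes a true metric on $\intsimpN$ precisely because the simplex selects a unique representative from each ray — so the contraction constant transfers unchanged from the semi-metric on the cone to the genuine metric on the simplex.
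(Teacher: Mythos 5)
Your proposal is correct: the paper itself states Theorem~\ref{T-contsimp} without proof (it is recalled as background, following \cite{ADFP:93,KLS:PosLinSys:e}), and your argument --- positivity of $L$ to ensure $\Tilde{L}$ maps $\intsimpN$ into itself, the definition of $\varkappa(L)$, and the invariance $\rho_{H}(sx,ty)=\rho_{H}(x,y)$ under positive scalings so that projecting by $P$ costs nothing --- is exactly the standard justification the paper implicitly relies on. The only step you pass over silently is that the infimum defining $\varkappa(L)$ is itself an admissible Lipschitz constant (i.e., $\rho_{H}(Lx,Ly)\le\varkappa(L)\rho_{H}(x,y)$ holds, not just for every $\lambda>\varkappa(L)$), but this follows by letting $\lambda\downarrow\varkappa(L)$ for fixed $x,y$ and is not a genuine gap.
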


\begin{remark}\label{R-Acontr}
If the simplex $\simpN$ is invariant for the positive linear operator $L$
then the assertion of Theorem~\ref{T-contsimp} can be simplified to:
\emph{if $L : \mathbb{R}^{N} \to \mathbb{R}^{N}$is a positive linear
operator (matrix) for which $L\simpN \subseteq\simpN$, then it is a
contracting mapping with the contraction constant $\varkappa(L)$ on the
metric space $(\intsimpN,\rho_{H})$.}
\end{remark}

\section{Attractors of linear
cocycles with a tridiagonal generator}\label{S-LinCocycle}

The result of Sections~\ref{S-CoMetric} and \ref{S-HilbBirk} will be applied
here to the linear system generated by the tridiagonal Markov chains that
were introduced in Section~\ref{S-model}.

Let $\mathscr{L}$ be a set of linear operators $L_{\omega} :\mathbb{R}^{N}
\to \mathbb{R}^{N}$ parametrized by the parameter $\omega$ taking values in
some set $\Omega$ and let $\{\theta_{n}, n\in\mathbb{Z}\}$ be a
(discrete-time) group of maps of $\Omega$ onto itself. The maps
$L_{\omega}x$ serve as the generator of a linear cocycle
\[
F_{\mathscr{L}}(n,\omega)x=L_{\theta_{n-1}\omega}\cdots L_{\theta_{1}\omega}L_{\theta_{0}\omega}x.
\]

In particular, consider the case where
\begin{equation}\label{E-Lomega}
 L_{\omega}:=I_N+ \Delta Q(\omega)
\end{equation}
with the tridiagonal matrices $Q(\omega)$ of the form \eqref{E-defQ} for
which the band entries $q_{i}$ depend on the parameter $\omega \in \Omega$,
i.e., $q_{i} = q_{i}(\omega)$ for $i = 1, 2, \ldots, 2N-2$.

In the sequel the following basic assumption will be used.
\begin{assumption}\label{ass}
There exist numbers $0 < \alpha \le \beta < \infty$ such that the uniform
estimates
\begin{equation}\label{E-mainA}
\alpha\le q_{i}(\omega)\le\beta,\quad \omega\in\Omega,~ i=1,2,\ldots,2N-2,
\end{equation}
hold.
\end{assumption}

\begin{lemma}\label{L-Lnonneg}
Let $\Delta < \frac{1}{2\beta}$. Then the entries of all of the matrices
$L_{\omega}$, $\omega\in\Omega$, are nonnegative, and thus
\[
L_{\omega}\mathbb{K}^{N}_{+}\subseteq\mathbb{K}^{N}_{+},\quad \omega\in\Omega.
\]
\end{lemma}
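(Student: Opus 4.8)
The plan is to reduce the cone-invariance statement to a purely entrywise check. By the footnote to Definition~\ref{D-poslinop}, a matrix maps $\coneN$ into itself if and only if all of its entries are nonnegative, so it suffices to verify that every entry of $L_{\omega} = I_N + \Delta Q(\omega)$ is nonnegative for each $\omega \in \Omega$. Since the identity contributes only to the diagonal, I would split the verification into the off-diagonal and diagonal entries of $L_{\omega}$.

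The off-diagonal entries require nothing beyond positivity of the band coefficients. The off-diagonal entries of $Q(\omega)$ are precisely the coefficients $q_i(\omega)$ appearing in \eqref{E-defQ}, which are positive by Assumption~\ref{ass}; adding $I_N$ leaves them untouched, so each off-diagonal entry of $L_{\omega}$ equals $\Delta\, q_i(\omega) > 0$. Thus these entries are automatically nonnegative and the bound on $\Delta$ plays no role here.

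The bound on $\Delta$ enters only through the diagonal entries, and this is the one step that needs the precise constant. Reading off \eqref{E-defQ}, the diagonal entries of $L_{\omega}$ are $1 - \Delta q_1(\omega)$ in the first position, $1 - \Delta\bigl(q_{2j-2}(\omega) + q_{2j-1}(\omega)\bigr)$ for $j = 2, \ldots, N-1$, and $1 - \Delta q_{2N-2}(\omega)$ in the last position. The binding case is an interior diagonal entry, where two coefficients are summed: using $q_i(\omega) \le \beta$ from \eqref{E-mainA} gives $q_{2j-2}(\omega) + q_{2j-1}(\omega) \le 2\beta$, hence $\Delta\bigl(q_{2j-2}(\omega) + q_{2j-1}(\omega)\bigr) \le 2\beta\Delta < 1$, the last inequality holding exactly because $\Delta < \frac{1}{2\beta}$. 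The two endpoint diagonal entries involve only a single coefficient $q \le \beta < 2\beta$ and are therefore even more comfortably positive, so every diagonal entry is strictly positive.

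Combining the two cases, all entries of $L_{\omega}$ are nonnegative (in fact the off-diagonal band and the diagonal are strictly positive), so by the characterization in Definition~\ref{D-poslinop} each $L_{\omega}$ is a nonnegative operator and $L_{\omega}\coneN \subseteq \coneN$ for every $\omega \in \Omega$. There is no genuine obstacle in this lemma; the only point worth flagging is the origin of the factor $2$ in the hypothesis $\Delta < \frac{1}{2\beta}$, which is forced by the interior diagonal entries, where two band coefficients add and the worst case is $q_{2j-2}(\omega) + q_{2j-1}(\omega)$ close to $2\beta$.
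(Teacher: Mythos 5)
Your proof is correct and takes essentially the same route as the paper, whose proof simply asserts that the claim follows directly from $L_{\omega} = I_N + \Delta Q(\omega)$ and the form \eqref{E-defQ}; you have merely spelled out the entrywise verification (positive off-diagonal band, diagonal entries bounded below by $1-2\Delta\beta>0$) that the paper leaves implicit. One negligible imprecision: the entries of $L_{\omega}$ outside the tridiagonal band are zero rather than of the form $\Delta q_i(\omega)$, but they are of course nonnegative, so nothing is affected.
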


\begin{proof}
This follows directly from the representation $L_{\omega} : =I_N+ \Delta
Q(\omega)$ and the form \eqref{E-defQ} of the matrices $Q(\omega)$.
\end{proof}

If $\Delta < \frac{1}{2\beta}$, then $\gamma
:=\min\{\Delta\alpha,1-2\Delta\beta\} > 0$. Define the subset
$\Sigma_{N}(\gamma)$ of the simplex $\Sigma_{N}$ as
\[
\Sigma_{N}(\gamma) = \big\{x=(x_{1},x_{2},\ldots,x_{N}):~
\sum_{i=1}^{N}x_{i}=1,~x_{1},x_{2},\ldots,x_{N}\ge\gamma^{N-1}\big\},
\]
and the subcone $\coneN(\gamma)$ of the cone $\coneN$ as

\[
\coneN(\gamma)= \big\{tx: t>0,~ x\in \Sigma_{N}(\gamma)\big\}.
\]

Clearly
\[
\coneN(\gamma)\setminus\{0\}\subseteq \intconeN.
\]

\begin{lemma}\label{L-Lpos}
Let $\Delta < \frac{1}{2\beta}$ and $\gamma
=\min\{\Delta\alpha,1-2\Delta\beta\}$. Then,
\begin{equation}\label{E-Adiss}
F_{\mathscr{L}}(N-1,\omega)\Sigma_{N}\subseteq \Sigma_{N}(\gamma),\quad \omega\in\Omega,
\end{equation}
and, hence,
\[
F_{\mathscr{L}}(N-1,\omega)(\coneN\setminus\{0\})\subseteq \coneN(\gamma)\setminus\{0\}\subseteq \intconeN,\quad \omega\in\Omega.
\]
\end{lemma}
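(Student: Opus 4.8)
The plan is to reduce the statement to a single uniform lower bound on the \emph{entries} of the product matrix $F_{\mathscr{L}}(N-1,\omega)=L_{\theta_{N-2}\omega}\cdots L_{\theta_{0}\omega}$. For $x\in\simpN$ we write $\bigl(F_{\mathscr{L}}(N-1,\omega)x\bigr)_{j}=\sum_{k}[F_{\mathscr{L}}(N-1,\omega)]_{jk}\,x_{k}$, so that if every entry of this matrix is at least $\gamma^{N-1}$, then each coordinate of the image satisfies $\bigl(F_{\mathscr{L}}(N-1,\omega)x\bigr)_{j}\ge\gamma^{N-1}\sum_{k}x_{k}=\gamma^{N-1}$. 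The coordinates of the image still sum to $1$, because each factor $L_{\omega}$ is column stochastic (each column of $Q(\omega)$ sums to zero, cf.\ \eqref{E-oneQ2zero}, together with the nonnegativity from Lemma~\ref{L-Lnonneg}), and a product of column-stochastic matrices is again column stochastic. Hence the image lies in $\simpN$, and with the entry bound it lies in $\Sigma_{N}(\gamma)$, giving \eqref{E-Adiss}.

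First I would record entry-wise lower bounds for a single factor. By the tridiagonal form \eqref{E-defQ} and Assumption~\ref{ass}, the hypothesis $\Delta<\tfrac1{2\beta}$ forces every diagonal entry of $L_{\omega}$ to satisfy $1-\Delta(q_{2j-2}+q_{2j-1})\ge 1-2\Delta\beta\ge\gamma$ (the two endpoint entries $1-\Delta q_{1}$ and $1-\Delta q_{2N-2}$ are even larger), while every sub- and super-diagonal entry equals $\Delta q_{i}$ with $q_{i}\in[\alpha,\beta]$ and is therefore $\ge\Delta\alpha\ge\gamma$. Thus, uniformly in $\omega$, each factor $L_{\theta_{i}\omega}$ has all of its tridiagonal band entries bounded below by $\gamma$ and all remaining entries equal to $0$.

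The core step is the claim that $[F_{\mathscr{L}}(n,\omega)]_{jk}\ge\gamma^{n}$ whenever $|j-k|\le n$, which I would prove by induction on $n$ using $F_{\mathscr{L}}(n+1,\omega)=L_{\theta_{n}\omega}\,F_{\mathscr{L}}(n,\omega)$. The base case $n=0$ is the identity matrix. For the inductive step, expand $[F_{\mathscr{L}}(n+1,\omega)]_{jk}=\sum_{r}[L_{\theta_{n}\omega}]_{jr}\,[F_{\mathscr{L}}(n,\omega)]_{rk}$ and keep only the single term with $r$ chosen one unit from $j$ towards $k$ (and $r=j$ when $j=k$); this $r$ satisfies $|j-r|\le 1$ and $|r-k|\le n$, so the two factors are $\ge\gamma$ and $\ge\gamma^{n}$, while all discarded terms are nonnegative. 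Since the index set $\{1,\dots,N\}$ has diameter $N-1$ as a path graph, at $n=N-1$ the condition $|j-k|\le N-1$ holds for \emph{all} pairs $(j,k)$, so $[F_{\mathscr{L}}(N-1,\omega)]_{jk}\ge\gamma^{N-1}$ for every $j,k$, completing the first inclusion.

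For the second inclusion, take $x\in\coneN\setminus\{0\}$ and set $t=\sum_{i}x_{i}>0$, so $x=t\hat x$ with $\hat x\in\simpN$. By linearity $F_{\mathscr{L}}(N-1,\omega)x=t\,F_{\mathscr{L}}(N-1,\omega)\hat x$ with $F_{\mathscr{L}}(N-1,\omega)\hat x\in\Sigma_{N}(\gamma)$, whence $F_{\mathscr{L}}(N-1,\omega)x\in\coneN(\gamma)\setminus\{0\}$ by the definition of $\coneN(\gamma)$, and the final inclusion $\coneN(\gamma)\setminus\{0\}\subseteq\intconeN$ has already been observed. The main obstacle is methodological rather than computational: the tempting route of tracking one distinguished coordinate of mass $\ge 1/N$ and following how it spreads along the chain yields only the weaker bound $\gamma^{N-1}/N$; the spurious factor $N$ is recovered exactly by bounding \emph{every} matrix entry from below and then summing against the probability constraint $\sum_{k}x_{k}=1$. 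One must also keep in mind that the factors $L_{\theta_{i}\omega}$ are distinct, so the argument is stated for a product of different matrices rather than a power, the only property used being the $\omega$-uniform lower bound $\gamma$ on the band entries.
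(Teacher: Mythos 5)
Your proof is correct and follows essentially the same route as the paper's: an induction showing that the band entries of the product $F_{\mathscr{L}}(n,\omega)$ are bounded below by $\gamma^{n}$ (so that at $n=N-1$ every entry exceeds $\gamma^{N-1}$), combined with column-stochasticity $\boldsymbol{1}_N^T L_{\omega}=\boldsymbol{1}_N^T$ to keep images in $\simpN$. The only difference is cosmetic: the paper also notes that entries outside the band vanish and leaves the induction as a one-line claim, whereas you carry out the inductive step explicitly (and correctly) while observing that the lower bound on the band alone suffices.
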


\begin{proof}
Fix an $\omega\in\Omega$. By induction, it can be shown for each
$n=1,2,\ldots,N-1$ that the matrix
\[
F_{\mathscr{L}}(n,\omega)=L_{\theta_{n-1}\omega}\cdots L_{\theta_{1}\omega}L_{\theta_{0}\omega}
\]
is $(2n+1)$-diagonal and that its components belonging to the main diagonal
and to the first $n$ sub and super diagonals are greater than or equal to
$\gamma^{n}$, while others vanish. Thus
\begin{equation}\label{E-Entpos}
\text{\em all components
of the matrix~} F_{\mathscr{L}}(N-1,\omega) \text{\em ~are positive and
exceed~} \gamma^{N-1}.
\end{equation}
By \eqref{E-oneQ2zero} and \eqref{E-Lomega}
\[
\boldsymbol{1}_N^T L_{\omega} = \boldsymbol{1}_N^T,\quad \omega\in\Omega,
\]
and \eqref{E-Entpos} implies that
\[
F_{\mathscr{L}}(N-1,\omega)\intsimpN\subseteq
\Sigma_{N}(\gamma)\subset\intsimpN,\quad\omega\in\Omega.
\]
The inclusion \eqref{E-Adiss} is thus established.
\end{proof}

\begin{theorem}\label{T-linear}
Let $F_{\mathscr{L}}(n,\omega)x$ be the linear cocycle
\[
F_{\mathscr{L}}(n,\omega)x=L_{\theta_{n-1}\omega}\cdots L_{\theta_{1}\omega}L_{\theta_{0}\omega}x.
\]
with matrices $L_{\omega} := I_N+ \Delta Q(\omega)$, where the tridiagonal
matrices $Q(\omega)$ are of the form \eqref{E-defQ} with the entries $q_{i}
= q_{i}(\omega)$ satisfying the uniform estimates \eqref{E-mainA} in
Assumptiom \ref{ass}. In addition, suppose that $\Delta < \frac{1}{2\beta}$.

Then, the set $\simpN$ is invariant under $F_{\mathscr{L}}(N-1,\omega)$,
i.e.,
\[
F_{\mathscr{L}}(N-1,\omega)\simpN\subseteq
\simpN,\quad\omega\in\Omega.
\]
Moreover, the restriction of $F_{\mathscr{L}}(n,\omega)x$ to the set
$\simpN$ is a uniformly dissipative and uniformly contractive cocycle (with
respect to the Hilbert metric), which has a random attractor $\mathscr{A} =
\{A_{\omega}, \omega\in\Omega\}$ such that each set $A_{\omega}$,
$\omega\in\Omega$, consists of a single point.
\end{theorem}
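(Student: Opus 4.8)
The plan is to verify the hypotheses of the abstract Theorem~\ref{T-metric} for the cocycle $F_{\mathscr{L}}(n,\omega)$ restricted to $\simpN$, taking the complete metric space $(M,\rho)$ to be $(\intsimpN,\rho_{H})$ as furnished by Corollary~\ref{C-simplex}. The invariance statement $F_{\mathscr{L}}(N-1,\omega)\simpN\subseteq\simpN$ follows immediately from the stochasticity of the $L_{\omega}$: by \eqref{E-oneQ2zero} and \eqref{E-Lomega} we have $\boldsymbol{1}_N^T L_{\omega}=\boldsymbol{1}_N^T$, so each $L_{\omega}$ preserves the coordinate sum, and Lemma~\ref{L-Lnonneg} guarantees nonnegativity; hence the product preserves $\simpN$. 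Thus it remains to check uniform dissipativity and uniform contractivity, and then invoke Theorem~\ref{T-metric} to obtain the singleton random attractor.

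For uniform dissipativity I would take $N_{d}=N-1$ and the closed bounded set $M_{0}=\Sigma_{N}(\gamma)$, which is a compact subset of $\intsimpN$. By Lemma~\ref{L-Lpos}, specifically the inclusion \eqref{E-Adiss}, we have $F_{\mathscr{L}}(N-1,\omega)\simpN\subseteq\Sigma_{N}(\gamma)$ for every $\omega$, which is exactly \eqref{E-diss} with $M=\intsimpN$ (or its closure). One subtlety to flag is that $M_{0}$ must be bounded in the Hilbert metric, not merely in the Euclidean sense; this holds precisely because $\Sigma_{N}(\gamma)$ has all coordinates bounded below by $\gamma^{N-1}>0$, so the ratios $\max_i y_i/x_i$ appearing in \eqref{E-HRN} stay bounded, giving a finite $\rho_{H}$-diameter.

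For uniform contractivity I would take $N_{c}=N-1$. The key input is Birkhoff's Theorem~\ref{T-Birk} together with Theorem~\ref{T-contsimp}: each product matrix $F_{\mathscr{L}}(N-1,\omega)$ is a \emph{positive} operator by \eqref{E-Entpos} in the proof of Lemma~\ref{L-Lpos}, so its induced simplex map is $\rho_{H}$-contracting with constant $\varkappa(F_{\mathscr{L}}(N-1,\omega))\le\tanh(\tfrac14\delta(F_{\mathscr{L}}(N-1,\omega)))<1$. The genuinely load-bearing step, and the one I expect to be the main obstacle, is obtaining a contraction constant $\lambda<1$ that is \emph{uniform in} $\omega$. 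Birkhoff's bound is uniform only if the projective diameters $\delta(F_{\mathscr{L}}(N-1,\omega))$ are uniformly bounded above; this must be extracted from Assumption~\ref{ass}. The plan is to use the two-sided bounds \eqref{E-mainA} to show that every entry of $F_{\mathscr{L}}(N-1,\omega)$ lies between $\gamma^{N-1}$ and some $\omega$-independent upper bound (coming from $\beta$ and $\Delta$), so that the ratio of the largest to the smallest entry is controlled uniformly; a standard estimate then bounds $\delta(F_{\mathscr{L}}(N-1,\omega))$ by a finite constant depending only on $\alpha,\beta,\Delta,N$, yielding a single $\lambda=\tanh(\tfrac14\delta_{\max})<1$ valid for all $\omega$.

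With uniform dissipativity and uniform contractivity established on $(\intsimpN,\rho_{H})$, Theorem~\ref{T-metric} applies verbatim and produces a random attractor $\mathscr{A}=\{A_{\omega},\omega\in\Omega\}$ whose fibers are singletons. One final bookkeeping point worth noting is the mismatch between the linear cocycle $F_{\mathscr{L}}$ acting on the cone and the simplex map to which Theorems~\ref{T-contsimp} and \ref{T-metric} literally apply: since $\rho_{H}(sx,ty)=\rho_{H}(x,y)$ for all $s,t>0$, the contraction estimate for the linear product descends unchanged to the normalized action on $\simpN$, so the two viewpoints agree and no separate argument is needed.
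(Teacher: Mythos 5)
Your proposal is correct and takes essentially the same route as the paper's own proof: the same choices $M=\intsimpN$, $\rho=\rho_{H}$, $M_{0}=\Sigma_{N}(\gamma)$, $N_{d}=N_{c}=N-1$, with Lemma~\ref{L-Lpos} giving dissipativity, Birkhoff's Theorem~\ref{T-Birk} giving contractivity, and the final extension from $\intsimpN$ to $\simpN$ handled by the fact that $F_{\mathscr{L}}(N-1,\omega)$ maps all of $\simpN$ into $\Sigma_{N}(\gamma)$. The uniform-in-$\omega$ contraction constant that you flag as the main obstacle is obtained in the paper exactly as you sketch: since the image of $\simpN$ lies in $\Sigma_{N}(\gamma)$, the projective diameter of $F_{\mathscr{L}}(N-1,\omega)$ is bounded by the Hilbert diameter $\delta$ of $\Sigma_{N}(\gamma)$, which is finite by \eqref{E-HRN} and independent of $\omega$, yielding $\varkappa\le\tanh\left(\frac{1}{4}\delta\right)<1$ for all $\omega$.
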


\begin{proof}
Under the assumptions, $\gamma :=\min\{\Delta\alpha,1-2\Delta\beta\} > 0$.
Define
\[
\delta=\sup\left\{\rho_{H}(x,y):~ x,y\in\Sigma_{N}(\gamma)\right\}.
\]
It follows by formula \eqref{E-HRN} for the Hilbert metric in $\coneN$ that
$\delta<\infty$, so
\begin{equation}\label{E-defkappa}
\varkappa\le\tanh\left(\frac{1}{4}\delta\right)<1.
\end{equation}
Hence, by Lemmata~\ref{L-Lnonneg} and \ref{L-Lpos}, for each
$\omega\in\Omega$ the matrix $F_{\mathscr{L}}(N-1,\omega)$ satisfies the
conditions of Theorem~\ref{T-Birk} and is thus uniformly contractive in the
metric space $(\intsimpN,\rho_{H})$ with the contraction constant
$\varkappa$. Moreover, by Lemma~\ref{L-Lpos}, each of the matrices
$F_{\mathscr{L}}(N-1,\omega)$, $\omega\in\Omega$, maps the set $\intsimpN$
to the set $\Sigma_{N}(\gamma) \subset \intsimpN$, which is bounded in the
Hilbert metric $\rho_{H}$.

Setting $M = \intsimpN$ and $\rho = \rho_{H}$, $M_{0} =\Sigma_{N}(\gamma)$
and $N_{d} = N_{c} = N-1$, it follows that all of the conditions of
Theorem~\ref{T-metric} are satisfied. Hence the restriction of the cocycle
$F_{\mathscr{L}}(n,\omega)x$ to the set $\intsimpN$ has a random attractor
$\mathscr{A} =\{A_{\omega}, \omega\in\Omega\}$ such that each set
$A_{\omega}$, $\omega\in\Omega$, consists of a single point.

To complete the proof it remains to note only that due to Lemma~\ref{L-Lpos}
all the matrices $F_{\mathscr{L}}(N-1,\omega)$, $\omega\in\Omega$, not only
map the set $\intsimpN$ to the set $\Sigma_{N}(\gamma) \subset \intsimpN$,
but also map the larger set $\simpN$ to the set $\Sigma_{N}(\gamma) \subset
\intsimpN$. Hence, the family of sets $\mathscr{A} = \{A_{\omega},
\omega\in\Omega\}$ is a random attractor for the cocycle
$F_{\mathscr{L}}(n,\omega)x$ considered not only on the set $\intsimpN$, but
also on its closure $\simpN$.
\end{proof}

\begin{remark}\label{Rem5}
The proof of Theorem~\ref{T-linear} implies that the component sets
$A_{\omega}$ of the random attractor $\mathscr{A} = \{A_{\omega},
\omega\in\Omega\}$ satisfy the inclusion $A_{\omega}\subseteq
\Sigma_{N}(\gamma)$ for every $\omega\in\Omega$.
\end{remark}

Henceforth write $A_{\omega} = \{a_{\omega}\}$ for the singleton component
subsets of the random attractor $\mathscr{A}$. Then the random attractor is
an entire random sequence $\{a_{\theta_n \omega}, n \in \mathbb{Z}\}$ in
$\Sigma_{N}(\gamma)\subset\intsimpN$, which attracts other iterates of the
random Markov chain in the pullback sense. Pullback convergence involves
starting at earlier initial times with a fixed end time, see
\cite{CKS:NDST02,PKMR}. It is, generally, not the same as forward
convergence in the sense usually understood in dynamical systems, but in
this case it is the same due to the uniform boundedness of the contractive
rate with respect to $\omega$. By \eqref{E-defkappa}
\[
\varkappa(L_{\omega}) \le \nu := \tanh\left(\frac{1}{4}\delta\right)<1, \quad \forall \omega\in\Omega.
\]
Now let $\bp^{(n)}(\omega)$ be the $n$th iterate of the random Markov chain
\eqref{DDE}. Then
\[
 \rho_H\left(\bp^{(n+1)}(\omega),a_{\theta_{n+1} \omega}\right) \leq \nu \, \rho_H\left(\bp^{(n)}(\omega),a_{\theta_n \omega}\right)
\]
for all $n \geq 0$ and every $\omega \in \Omega$, since
\[
\bp^{n+1}(\omega) = L_{\theta_n \omega} \bp^{(n)}(\omega),
\quad a_{\theta_{n+1} \omega} = L_{\theta_{n} \omega} a_{\theta_n \omega}.
\]
Hence,
\[
 \rho_H\left(\bp^{(n)}(\omega),a_{\theta_{n} \omega}\right) \leq \nu^n \, \rho_H\left(\bp^{(0)},a_{ \omega}\right)
\]
for all $n \geq 0$ and every $\omega \in \Omega$, from which follows the
pathwise forward convergence with respect to the Hilbert projective metric.
\[
 \rho_H\left(\bp^{(n)}(\omega),a_{\theta_{n} \omega}\right) \to 0 \quad \textrm{as}\quad n \to \infty, \qquad \forall \omega\in\Omega.
\]
By Remark \ref{R-unbound}, convergence in the Hilbert projective metric
implies convergence in any norm on $\mathbb{R}^N$. This gives

\begin{corollary}
For any norm $\|\cdot\|$ on $\mathbb{R}^N$, $\bp^{(0)} \in\simpN$ and
$\omega \in \Omega$
\[
\left\|\bp^{(n)}(\omega) - a_{\theta_{n} \omega}\right\| \to 0 \quad \textrm{as}\quad n \to \infty, \qquad \forall \omega\in\Omega.
\]
\end{corollary}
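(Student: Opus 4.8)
The plan is to read the Corollary off the pathwise forward convergence in the Hilbert projective metric that has just been established, using only the topological comparison recorded in Remark~\ref{R-unbound}. All of the quantitative work --- the geometric contraction with rate $\nu<1$ uniform in $\omega$ --- lives in the metric $\rho_{H}$; the Corollary merely re-expresses the resulting convergence with respect to an arbitrary norm on $\mathbb{R}^{N}$.

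Concretely, I would fix $\omega\in\Omega$ and $\bp^{(0)}\in\simpN$. By Lemma~\ref{L-Lpos} the operator $F_{\mathscr{L}}(N-1,\omega)$ carries all of $\simpN$ into $\Sigma_{N}(\gamma)\subset\intsimpN$, so for every $n\ge N-1$ the iterate $\bp^{(n)}(\omega)$ lies in $\Sigma_{N}(\gamma)$; by Remark~\ref{Rem5} the attractor point $a_{\theta_{n}\omega}$ lies there too. Both points therefore sit in the complete metric space $(\intsimpN,\rho_{H})$ of Corollary~\ref{C-simplex}, at finite mutual distance at most $\delta$, and the uniform geometric contraction derived above gives
\[
\rho_{H}\!\left(\bp^{(n)}(\omega),a_{\theta_{n}\omega}\right)\le\nu^{\,n-(N-1)}\delta\longrightarrow 0\qquad\text{as }n\to\infty.
\]
Since $\bp^{(n)}(\omega)$ and $a_{\theta_{n}\omega}$ both lie in $\simpN$, they are genuine normalised probability vectors rather than mere rays, so this is honest convergence of points. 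Invoking Remark~\ref{R-unbound}, which asserts that convergence in $(\intsimpN,\rho_{H})$ forces convergence in every norm on $\mathbb{R}^{N}$, we obtain $\|\bp^{(n)}(\omega)-a_{\theta_{n}\omega}\|\to 0$ for each norm $\|\cdot\|$, which is the claim. The uniformity of $\nu$ in $\omega$ makes this a forward (not merely pullback) statement, valid for all $\omega$ simultaneously.

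The one point that needs care --- and the step I would flag as the only real obstacle --- is the possibility that $\bp^{(0)}\in\partial\simpN$, for then $\rho_{H}(\bp^{(0)},a_{\omega})=+\infty$ and the raw one-step inequality preceding the Corollary is vacuous at $n=0$. This is exactly why I would not begin the geometric estimate at $n=0$ but first absorb an initial transient of length $N-1$: Lemma~\ref{L-Lpos} guarantees that after $N-1$ applications the iterate has been pushed into $\Sigma_{N}(\gamma)$, strictly inside the simplex, where $\rho_{H}$ is finite and Corollary~\ref{C-simplex} supplies a bona fide complete metric. Once this transient is handled, the remainder is the routine combination of the uniform geometric contraction with the Hilbert-versus-norm comparison of Remark~\ref{R-unbound}, and the proof is complete.
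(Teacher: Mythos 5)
Your proposal is correct and follows the same two-step route as the paper: establish geometric decay of $\rho_{H}\left(\bp^{(n)}(\omega),a_{\theta_{n}\omega}\right)$ uniformly in $\omega$, then invoke Remark~\ref{R-unbound} to convert Hilbert-metric convergence into convergence in an arbitrary norm. The one genuine difference is how the estimate is initialised, and here your version is tighter than the paper's. The paper iterates its one-step inequality down to $n=0$ and concludes $\rho_{H}\left(\bp^{(n)}(\omega),a_{\theta_{n}\omega}\right)\le\nu^{n}\,\rho_{H}\left(\bp^{(0)},a_{\omega}\right)$; this bound is vacuous whenever $\bp^{(0)}$ lies on the boundary of $\simpN$ (e.g.\ a deterministic initial state concentrated at a single site), since then $\rho_{H}\left(\bp^{(0)},a_{\omega}\right)=+\infty$, even though the Corollary is stated for all $\bp^{(0)}\in\simpN$. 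Your device of absorbing a transient of $N-1$ steps --- using Lemma~\ref{L-Lpos} to place $\bp^{(N-1)}(\omega)$ in $\Sigma_{N}(\gamma)$ and Remark~\ref{Rem5} to place $a_{\theta_{N-1}\omega}$ there as well, so that the geometric estimate starts from the finite bound $\delta$ --- closes exactly the gap that the paper glosses over.

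One caveat, which you inherit from the paper rather than introduce: the per-step contraction factor $\nu$ is not actually justified for the individual matrices $L_{\omega}$. For $N\ge 3$ a tridiagonal $L_{\omega}$ is nonnegative but not positive, its projective diameter is infinite, and Birkhoff's theorem (Theorem~\ref{T-Birk}) yields strict contraction only for the $(N-1)$-fold products $F_{\mathscr{L}}(N-1,\omega)$; a single step is merely nonexpansive, since $L_{\omega}$ maps $\intconeN$ into itself. This does not damage your argument: for every $n\ge N-1$ both $\bp^{(n)}(\omega)$ and $a_{\theta_{n}\omega}$ lie in $\Sigma_{N}(\gamma)$, hence at mutual distance at most $\delta$, and applying the $(N-1)$-step contraction as many times as fits gives $\rho_{H}\left(\bp^{(n)}(\omega),a_{\theta_{n}\omega}\right)\le\nu^{\lfloor (n-(N-1))/(N-1)\rfloor}\,\delta\to 0$, after which Remark~\ref{R-unbound} finishes the proof exactly as you say. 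So replace your exponent $n-(N-1)$ by $\lfloor (n-(N-1))/(N-1)\rfloor$ and your write-up is fully rigorous --- indeed more so than the paper's own.
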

The random attractor is, in fact, asymptotic Lyapunov stable in the
conventional forward sense.

\subsection{Deterministic nonautonomous Markov chains}\label{S-discus}

The above proofs make no use of probabilistic properties of the sample path
parameter $\omega$ (apart from $\mathcal{F}$-measurability considerations,
which are not an essential part of the proof). It applies immediately to
deterministic nonautonomous Markov chains in which the transition
probabilities vary, say, periodically in time.

As described in \cite{PKMR}, this time variation can be modelled by letting
$\omega$ be an bi-infinite sequence $\omega =(\omega_n)_{n\in \mathbb{Z}}
\in \Lambda^{\mathbb{Z}}$, i.e., with $\omega_n \in \Lambda$, $n \in
\mathbb{Z}$, for some compact metric space $(\Lambda,\rho_{\Lambda})$. Then
$\Omega =\Lambda^{\mathbb{Z}}$ is a compact metric space with the metric
\[
\rho_{\Omega}(\omega,\bar{\omega}) =
\sum_{n\in \mathbb{Z}} 2^{-|n|} \rho_{\Lambda}(\omega_n,\bar{\omega}_n)
\]
and the shift operator $\theta (\omega_n)_{n\in \mathbb{Z}}
=(\omega_{n+1})_{n\in \mathbb{Z}}$ is continuous in the metric
$\rho_{\Omega}$. It turns out then that $\omega \mapsto a_{\omega}$ is
continuous here (in general, the set-valued mapping $\omega \mapsto
A_{\omega}$ is only upper semi-continuous). These topological properties of
the driving system replace the measurability properties in the random
dynamical systems.

  \providecommand{\bbljan}[0]{January} \providecommand{\bblfeb}[0]{February}
  \providecommand{\bblmar}[0]{March} \providecommand{\bblapr}[0]{April}
  \providecommand{\bblmay}[0]{May} \providecommand{\bbljun}[0]{June}
  \providecommand{\bbljul}[0]{July} \providecommand{\bblaug}[0]{August}
  \providecommand{\bblsep}[0]{September} \providecommand{\bbloct}[0]{October}
  \providecommand{\bblnov}[0]{November} \providecommand{\bbldec}[0]{December}

\end{document}